\theoremstyle{definition}
\newtheorem{theorem}{Theorem}[section]
\newtheorem{lemma}[theorem]{Lemma}
\newtheorem{proposition}[theorem]{Proposition}
\newtheorem{corollary}[theorem]{Corollary}
\newtheorem{remark}[theorem]{Remark}
\newcommand{\IR}{\hbox{$\mathbb{R}$}}
\newcommand{\IZ}{\hbox{$\mathbb{Z}$}}
\newcommand{\IQ}{\hbox{$\mathbb{Q}$}}
\newcommand{\abs}[1]{\hbox{$\left| {#1} \right|$}}
\def\tt{\texttt}
\def\bf{\textbf}
\def\IE{\hbox {I \hskip -5.2pt {E}}}
\def\IR{\mathbb{R}}
\def\IQ{\mathbb{Q}}
\def\I1{\mathbb{1}}
\def\limx0{\lim_{x \to 0}}
\def\intxyleq1{\underset{\| x - y  \| \leq 1}{\int}}
\def\intxygeq1{\underset{\| x - y  \| \geq 1}{\int}}
\def\intxizetaleq1{\underset{\| \xi - \zeta  \| \leq 1}{\int}}
\def\intxizetageq1{\underset{\| \xi - \zeta \| \geq 1}{\int}}
\def\tab{\hskip 1mm}
\def\tab{\hspace{.1pc}}
\def\ttab{\hspace{1pc}}
\newcounter{hours}
\newcounter{minutes}
\newcommand\printtime{%
  \setcounter{hours}{\the\time/60}%
  \setcounter{minutes}{\the\time-\value{hours}*60}%
  \ifthenelse{\value{hours} > 12}
     {
       \setcounter{hours}{\value{hours}-12}%
       \thehours:\theminutes \ p.m.                
     }
     {
       \thehours:\theminutes \ a.m.                
     } 
}
\def\putdate{{\tt Compiled on \the\month-\the\day-\the\year \ at\printtime} \\}
\begin{document} 

\title{Bounding differences in Jager Pairs}
 \author{Avraham Bourla\\
   Department of Mathematics\\
   Saint Mary's College of Maryland\\
   Saint Mary's City, MD, 20686\\
   \texttt{abourla@smcm.edu}}
 \date{\today}
 \maketitle

\begin{abstract}
\noindent Symmetrical subdivisions in the space of Jager Pairs for continued fractions-like expansions will provide us with bounds on their difference. Results will also apply to the classical regular and backwards continued fractions expansions, which are realized as special cases.
\end{abstract}

\section{Introduction}

Given a real number $r$ and a rational number, written as the unique quotient $\frac{p}{q}$ of the relatively prime integers $p$ and $q>0$, our fundamental object of interest from diophantine approximation is the \bf{approximation coefficient} $\theta(r,\frac{p}{q}) := q^2\abs{r-\frac{p}{q}}$.  Small approximation coefficients suggest high quality approximations, combining accuracy with simplicity. For instance, the error in approximating $\pi$ using $\frac{355}{113}=3.1415920353982$ is smaller than the error of its decimal expansion to the fifth digit $3.14159 = \frac{314159}{100000}$. Since the former rational also has a much smaller denominator, it is of far greater quality than the latter. Indeed $\theta\big(\pi,\frac{355}{113}\big)< 0.0341$ whereas $\theta\big(\pi,\frac{314159}{100000}\big)> 26535$.\\

\noindent Since adding integers to fractions does not change their denominators, we have $\theta(r,\frac{p}{q}) = \theta(x ,\frac{p}{q} - \lfloor r \rfloor)$, where $\lfloor r \rfloor$ is the largest integer smaller than or equal to $r$ (a.k.a. the the floor of $r$) and $x := r - \lfloor r \rfloor \in [0,1)$, allowing us to restrict our attention to the unit interval. Expanding an irrational initial seed $x_0 \in (0,1) - \IQ$ as an infinite regular continued fraction
\begin{equation}\label{classical_gauss}
x_0 = [b_1,b_2,b_3,...]_0 := \frac{1}{b_1 + \frac{1}{b_2 + \frac{1}{b_3 + ...}}}
\end{equation}
provides us with the unique symbolic representation of the initial seed via the sequence $\{b_n\}_1^\infty$ of positive integers, known as the partial quotients or \bf{digits of expansion} of $x_0$. For all $n \ge 1$, we label the approximation coefficient associated with the $n$th \bf{convergent} $\frac{p_n}{q_n} := [b_1.b_2,...]_0$ of $x_0$ by $\theta_n$ and refer to the sequence $\{\theta_n\}_1^\infty$ as the \bf{sequence of approximation coefficients}. The value of $\displaystyle{\liminf_{n \to \infty}}\{\theta_n(x_0)\}$ measures how well can $x_0$ be approximated by rational numbers, leading to the construction of the Lagrange Spectrum, see \cite{CF}. In 1978, Jurkat and Peyerimhoff \cite{JP} showed that the \bf{Space of Jager Pairs} 
\[\left\{\left(\theta_n(x_0),\theta_{n+1}(x_0)\right) \subset \IR^2, \tab x_0 \in (0,1) - \IQ, \tab n \ge 1 \right\}\] 
is a dense subset of the region in the cartesian plane which is the interior of the triangle with vertices $(0,0), \tab (0,1)$ and $(1,0)$. We may also expand $x_0$ as an infinite backward continued fraction
\begin{equation}\label{classical_renyi}
x_0 = [b_1,b_2,b_3,...]_1 := 1 - \frac{1}{b_1 + 1 - \frac{1}{b_2 + 1 - \frac{1}{b_3 + ...}}}
\end{equation}
yielding a new unique sequence of digits, hence new sequences of convergents and approximation coefficients as well as a new space of Jager pairs.\\

\noindent We are going to reveal an elegant symmetrical internal structure for both spaces of Jager Spaces corresponding to the regular and backwards continued fractions expansions. Our approach is to treat the regular and continued fraction expansions as limiting cases for the two families of one parameter continued fraction-like expansions, first introduced by Haas and Molnar. Using simple plane geometry, we will provide in corollary \ref{result} upper and lower bounds for the growth rate of the associated sequence of approximation coefficients. For instance, knowing a priori that $b_2 = b_3 =1$ and $b_4 = 3$ are the digits of the classical regular continued fraction expansion will allow us to obtain the bounds $\abs{\theta_2-\theta_1} < \frac{\sqrt{2}}{3}$ and $\frac{2\sqrt{2}}{7} < \abs{\theta_3-\theta_2} < \frac{3\sqrt{2}}{5}$.

\begin{remark}
In this paper, we will only concern ourselves with bounding the growth rate of the sequence of approximation coefficients. For the arithmetical properties of this sequence, including its essential bounds and a Borel-type generalization on the minimum of three consecutive terms, refer to \cite{Bourla} 
\end{remark}

\section{Preliminaries}

This section is a paraphrased summery of excerpts from \cite{HM, HM2}, given for sake of completeness. In general, the fractional part of M$\operatorname{\ddot{o}}$bius transformation which map $[0,1]$ onto $[0,\infty]$ leads to expansion of real numbers as continued fractions. To characterize all these transformations, we recall that M$\operatorname{\ddot{o}}$bius transformations are uniquely determined by their values for three distinct points. Thus, we will need to introduce a parameter for the image of an additional point besides 0 and 1, which we will naturally take to be $\infty$. Since our maps fix the real line, the image of $\infty$, denoted by $-k$, can take any value within the set of negative real numbers. We let $m \in \{0,1\}$ equal zero for orientation reversing transformations, i.e. $0 \mapsto \infty, \tab 1 \mapsto 0$ and let $m$ equal one for orientation preserving transformations, i.e. $0 \mapsto 0, \tab 1 \mapsto \infty$. Conclude that all such transformations must be the extension of the maps $x \mapsto \frac{k(1-m-x)}{x-m}, \tab k>0$, mapping $(0,1)$ homeomorphically to $(0,\infty)$, to the extended complex plane. The maps $T_{(m,k)}: [0,1) \to [0,1), \tab 0 \mapsto 0$,
\[T_{(m,k)}(x) = \frac{k(1-m-x)}{x-m} - \bigg\lfloor\frac{k(1-m-x)}{x-m}\bigg\rfloor,\ttab x>0\]
are called \bf{Gauss-like} and \bf{R\'enyi-like} for $m=0$ and $m=1$ respectively.\\ 

\noindent We expand the initial seed $x_0 \in (0,1)$ as an $(m,k)-$continued fraction using the following iteration process: 
\begin{enumerate}
\item Set $n :=1$.
\item If $x_{n-1} = 0$, write the $(m,k)-$expansion of $x_0$ as $[a_1,...,a_{n-1}]_{(m,k)}$ and exit. 
\item Set  the \bf{digit} and \bf {future} of $x_0$ at time $n$ to be $a_n := \big\lfloor  \frac{k(1-m-x_{n-1})}{x_{n-1}-m} \big\rfloor \in \IZ_{\ge 0}$ and $x_n := \frac{k(1-m-x_{n-1})}{x_{n-1}-m} - a_n \in [0,1)$. Increase $n$ by one and go to step 2. 
\end{enumerate}
For all $n \ge 0$, we thus have
\[x_{n+1} = T_{(m,k)}(x_n) =  \frac{k(1-m-x_n)}{x_n-m} - a_n\]
so that
\[x_n = m + \frac{k(1-2m)}{a_{n+1}+k+x_{n+1}}. \] 
Therefore, this iteration scheme leads to the expansion of the initial seed $x_0$ as
\[x_0 = m + \frac{k(1-2m)}{a_1+k+x_1} = m + \frac{k(1-2m)}{a_1 + k + m + \frac{k(1-2m)}{a_2+k+x_2}} = ...\]
\begin{remark}\label{digit_remark}
The classical $k=1$ cases lead to the regular and backwards continued fractions expansions for $m=0$ and $m=1$ respectively. However, due to the definition of the map $T_{(m,k)}$ the digits of expansion will be smaller by one than their classical representation. For instance, 
\[[0,1,2]_{(0,k)} = \dfrac{k}{0+k+\dfrac{k}{1+k+\dfrac{k}{2+k}}} = \frac{k^2 + 4k + 2}{k^2 + 5k + 4}\]
and
\[[0,1,2]_{(1,k)} = 1 - \dfrac{k}{0+k+1-\dfrac{k}{1+k+1-\dfrac{k}{2+k}}} = \frac{k+4}{k^3 + 3k^2 + 5k + 4}\]
will yield, after plugging $k=1$, the fractions $[1,2,3]_0 = \frac{7}{10}$ and $[1,2,3]_1 = \frac{5}{13}$ using the classical expansions \eqref{classical_gauss} and \eqref{classical_renyi}. In order to avoid this confusion, we denote the $n$th $(m,k)-$ digit of expansion by $a_n$, whereas the $n$th $m$-digit of expansion is denoted by $b_n = a_n+1$.  
\end{remark} 

\noindent For the classical regular and backwards continued fraction expansions this iteration process eventually terminates precisely when the initial seed $x_0$ is a rational number. Analogously, we denote the countable set of all numbers in the interval with finite $(m,k)-$expansion by $\IQ_{(m,k)}$. For all $a \in \IZ_{\ge 0}$, the cylinder set 
\begin{equation}\label{Delta}
\Delta_a := \left(\frac{(1-m)k+m{a}}{a+k+1-m}, \frac{(1-m)k+m(a+1)}{a+k+m} \right)
\end{equation}
is defined such that $x_0 \in \Delta_a$ if and only if $a_1=a$. More generally, we have
\begin{equation}\label{x_n}
x_n \in \Delta_a \iff a_{n+1}=a, \ttab n \ge 0,
\end{equation}
that is, the restriction of the map $T_{(m,k)}$ to $\Delta_a$ is a homeomorphism onto (0,1). When $x_0$ is am $(m,k)-$ irrational, Define the \bf{past} of $x_0$ at time $n \ge 1$ to be $Y_1 := m - k - [a_1]_{(m,k)}$ and
\begin{equation}\label{Y_n}
Y_n := m - k - a_n - [a_{n-1},...,a_1]_{(m,k)} \in (m-k-a_n-1, m-k-a_n), \ttab n \ge 2.
\end{equation}
Then for all initial seeds $x_0 \in (0,1) - \IQ_{(m,k)}$ and $n \ge 1$, we have $(x_n,Y_n) \in \Omega_{(m,k)} := (0,1) \times (-\infty, m-k)$. We call the set $\Omega_{(m,k)}$ the \bf{space of dynamic pairs}.\\

\noindent The sequence of approximation coefficients $\left\{\theta_n(x_0)\right\}_1^\infty$ for the $(m,k)-$expansion is defined just like the classical object, i.e. $\theta_n(x_0) := q_n^2\abs{x_0 - \frac{p_n}{q_n}}$, where $\frac{p_n}{q_n} = [a_1,...,a_n]_{(m,k)} \in \IQ_{(m,k)}$ are the appropriate convergents. The sequence of approximation coefficients relates to the future and past sequences of $x_0$ using the identity  
\[\theta_n(x_0) = \dfrac{1}{x_{n+1} - Y_{n+1}},\]
first proven for the classical regular continued fraction case in 1921 by Perron \cite{Perron}. Each of the continuous maps
\begin{equation}\label{Psi}
\Psi_{(m,k)}: \Omega_{(m,k)} \to \IR^2, \hspace{1pc} (x,y) \mapsto \bigg(\dfrac{1}{x-y}, \frac{(m-x)(m-y)}{(2m-1)k(x-y)}\bigg),
\end{equation}
\[(\theta_n, \theta_{n+1}) = \Psi_{(m,k)}(x_{n+1},Y_{n+1}),\]
 is a homeomorphism onto its image 
\[\Gamma_{(m,k)} := \Psi_{(m,k)}(\Omega_{(m,k)}).\] 
Furthermore, the two families $\{\Psi_{(0,k)}\}$ and $\{\Psi_{(1,k)}\}$ are known to be equicontinuous. The corresponding \bf{Space of Jager Pairs} 
\[\{\left(\theta_n(x_0),\theta_{n+1}(x_0)\right) : \tab x_0 \in (0,1) - \IQ_{(m,k)}, \tab n \ge 1\}\] 
is a dense subset of $\Gamma_{(m,k)}$. In our approach, we will require the parameter $k$ to be greater than one, treating the classical regular and backwards continued fraction expansions as the limit of the $(m,k)$-expansions. As $k \to 1^+$. The choice of taking the limit from the right is not arbitrary, for the treatment of the $0 < k < 1$ cases exhibits certain pathologies, see \cite[section 3.3]{Thesis}. 

\section{The finer structure for the space of Jager Pairs}

Fix $m \in \{0,1\}, \tab k \in (1,\infty)$, and an initial seed $x_0 \in (0,1) - \IQ_{(m,k)}$. In order to ease the notation, we will omit the subscripts $\square_{(m,k)}$ from now on. For all $a \in \IZ_{\ge 0}$, define define the regions 
\[P_{(m,k,a)} = P_a := (0,1) \times(m-a-k-1,m-a-k)\] 
and 
\[F_{(m,k,a)} = F_a := \Delta_a \times (-\infty,m-k),\] 
where $\Delta_a$ is the cylinder set \eqref{Delta}. Using the identity \eqref{x_n} and the definition \eqref{Y_n} of $Y_n$, we see that that for all $n \ge 1$ we have
\[\left(x_{n+1},y_{n+1}\right) \in P_a \cap F_b \iff a_{n+1} = a \text{ and  $a_{n+2} = b$}.\] 
We label the images of each of these subsets of $\Omega$ under $\Psi$ by $P_a^\#$ and $F_a^\#$. Using formula \eqref{Psi} and the fact that $\Psi$ is a homeomorphism, we see that for all $n \ge 1$, we have
\begin{equation}\label{theta_a}
\left(\theta_n,\theta_{n+1}\right) \in P_a^\# \cap F_b^\# \iff a_{n+1} = a \text{ and  $a_{n+2} = b$}.
\end{equation} 
Next, let $p_{(m,k,a)}=p_a$ be the open horizontal line segment $(0,1) \times \{m-a-k\}$, let $f_{(m,k,a)}=f_a$ be the open vertical ray $\left\{\frac{(1-m)k+m}{a+k}\right\} \times (-\infty,m-k)$ and let $p_a^\#$ and $f_a^\#$ be their image under $\Psi$. Since both the collections $\left\{P_a \cup p_a \right\}_{a \in \IZ_{\ge 0}}$ and $\left\{F_a \cup f_a \right\}_{a \in \IZ_{\ge 0}}$ partition $\Omega$, the image of their intersections under $\Psi$, $\left\{(P_a^\# \cup p_a^\#) \cap (F_b^\# \cup f_b^\#)\right\}_{a,b \in \IZ_{\ge 0}}$ will partition $\Gamma$. We will call each member of this refined partition a \bf{subdivision}. 
\begin{proposition}\label{P_a_cap_F_b}
For all $k\in \IR >1$ and $a,b \in \IZ^+$ the region $P_a^\# \cap F_b^\#$ is the open interior of the quadrangle in $\IE^2$ with vertices
\[\left(\frac{b+k}{(1-2m)k+(a+k)(b+k)}, \frac{a+k}{(1-2m)k + (a+k)(b+k)}\right),\]
\[\left(\frac{b+k}{(1-2m)k+(a+k+1)(b+k)},\frac{a+k+1}{(1-2m)k + (a+k+1)(b+k)}\right),\]
\[\left(\frac{b+k+1}{(1-2m)k+(a+k)(b+k+1)},\frac{a+k}{(1-2m)k + (a+k)(b+k+1)}\right)\]
and
\[\left(\frac{b+k+1}{(1-2m)k+(a+k+1)(b+k+1)},\frac{a+k+1}{(1-2m)k + (a+k+1)(b+k+1)}\right).\]
\end{proposition}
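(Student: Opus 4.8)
The plan is to pull the statement back through the homeomorphism $\Psi$ to a coordinate rectangle, where the geometry is transparent. Since $\Psi$ is injective we have $P_a^\# \cap F_b^\# = \Psi\big(P_a \cap F_b\big)$, so the first step is to identify $P_a \cap F_b$. From $P_a = (0,1)\times(m-a-k-1,m-a-k)$ and $F_b = \Delta_b \times (-\infty, m-k)$, and because $a \ge 1$ forces $(m-a-k-1,m-a-k) \subset (-\infty,m-k)$, the intersection is the open axis-parallel rectangle
\[ P_a \cap F_b = \Delta_b \times (m-a-k-1,\, m-a-k), \]
whose four corners are $(x_\ast, y_\ast)$ with $x_\ast$ an endpoint of the cylinder $\Delta_b$ in \eqref{Delta} and $y_\ast \in \{m-a-k-1,\, m-a-k\}$.

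The key observation is that $\Psi$ carries each edge of this rectangle to a straight line segment. Along a horizontal edge $y \equiv c$, eliminate $x$ via $x - c = 1/\theta_n$ from the first coordinate of \eqref{Psi}; substituting into the second coordinate yields the affine relation $\theta_{n+1} = \frac{(m-c)^2}{(2m-1)k}\theta_n - \frac{m-c}{(2m-1)k}$, and along a vertical edge $x \equiv d$ one similarly gets $\theta_{n+1} = \frac{(m-d)^2}{(2m-1)k}\theta_n + \frac{m-d}{(2m-1)k}$. Thus the image of the rectangle's boundary is a closed curve assembled from four honest segments. Moreover the closure $\overline{\Delta_b}\times[m-a-k-1,m-a-k]$ is a compact subset of $\Omega$ (one checks $0<x_L$, $x_R<1$, and $m-a-k<m-k$ for $a\ge1$), so $\Psi$ restricts to a homeomorphism of this closed rectangle onto a closed topological disk, sending its boundary to the quadrilateral boundary and its open interior onto the enclosed open region. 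This region is therefore exactly the open interior of a quadrangle whose vertices are the $\Psi$-images of the four corners.

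It remains to evaluate $\Psi$ at the corners and match the result with the four listed points. The computation is unified by the identity $x_\ast - m = \dfrac{k(1-2m)}{D}$, valid for $m \in \{0,1\}$ after using $m^2 = m$, where $D = b+k+m$ when $x_\ast$ is the right endpoint of $\Delta_b$ and $D = b+k+1-m$ when $x_\ast$ is the left endpoint; in either case $D$ ranges over $\{b+k,\, b+k+1\}$. Writing $s := D$ and $t := m - y_\ast \in \{a+k,\, a+k+1\}$, and substituting $m - x_\ast = -k(1-2m)/s$, $m - y_\ast = t$, and $x_\ast - y_\ast = \big(ts + (1-2m)k\big)/s$ into \eqref{Psi}, both coordinates collapse to
\[ \Psi(x_\ast, y_\ast) = \left(\frac{s}{(1-2m)k + ts},\ \frac{t}{(1-2m)k + ts}\right). \]
As $(s,t)$ runs over $\{b+k, b+k+1\} \times \{a+k, a+k+1\}$ these are precisely the four vertices in the statement, which completes the identification.

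I expect the genuine content to be the linearization observation in the second paragraph, since it is what turns the image into a polygon; once it is in hand, confirming that the four segments bound a simple quadrangle and that the open rectangle maps onto its interior is automatic from the compactness argument above, with no convexity estimate needed. The remaining work is the corner bookkeeping, where the simplification $m^2 = m$ is what lets the Gauss-like ($m=0$) and R\'enyi-like ($m=1$) cases be handled by the single formula for $\Psi(x_\ast,y_\ast)$, even though the pairing of corners to vertices differs between the two cases.
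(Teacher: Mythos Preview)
Your proof is correct and follows essentially the same approach as the paper: both arguments rest on the observation that $\Psi$ sends horizontal segments $y=\text{const}$ and vertical segments $x=\text{const}$ to straight line segments in the $(u,v)$-plane, and then both compute the four corner images. The paper organizes this by describing the full boundary curves $p_a^\#$ and $f_b^\#$ (noting that $f_b^\#$ is the reflection of $p_b^\#$ across the diagonal) and reading off $p_a^\# \cap f_b^\#$, whereas you package the corner computation with the substitution $s\in\{b+k,b+k+1\}$, $t\in\{a+k,a+k+1\}$ and the identity $m^2=m$; this is a cosmetic difference in bookkeeping rather than a different idea.
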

\begin{proof}
Fix $a \in \IZ^+$ and for every $(x,y_0) \in p_a$ i.e. $x \in (0,1), \tab y_0 := m-k-a$, set $(u,v) := \Psi(x,y_0)$. Using the definition \eqref{Psi} of $\Psi$, we write $u = \frac{1}{x-y_0} = \frac{1}{x+a+k-m}$. Then, as $x$ tends from 0 to 1, $u$ tends from $\frac{1}{a+k-m}$ to $\frac{1}{a+k+1-m}$. Since $m - x = m - y - \frac{1}{u}$, we express $v$ in terms of $u$ as
\[v = \frac{(m-x)(m-y_0)}{(2m-1)k(x-y_0)} = \frac{(m-x)(m-y_0)u}{(2m-1)k} = \frac{a+k}{(2m-1)k}\left((a+k)u - 1\right)\]
so that as $x$ tends from 0 to 1, $v$ tends from $\frac{m(a+k)}{k(a+k-m)}$ to $\frac{(1-m)(k+a)}{k(k+a+1-m)}$. Since $\Psi:\Omega \to \Gamma$ is a homeomorphism, we see that $p_a^\#$ is an open segment of the line $(a+k)^2{u} + (1-2m)k{v} = a+k$ between the points $\left(\frac{1}{a+k-m}, \frac{m(a+k)}{k(a+k-m)}\right)$ and $\left(\frac{1}{a+k+1-m},\frac{(1-m)(k+a)}{k(a+k+1-m)}\right)$.\\

\begin{multicols}{2} 
\begin{center}
\includegraphics[scale=.42]{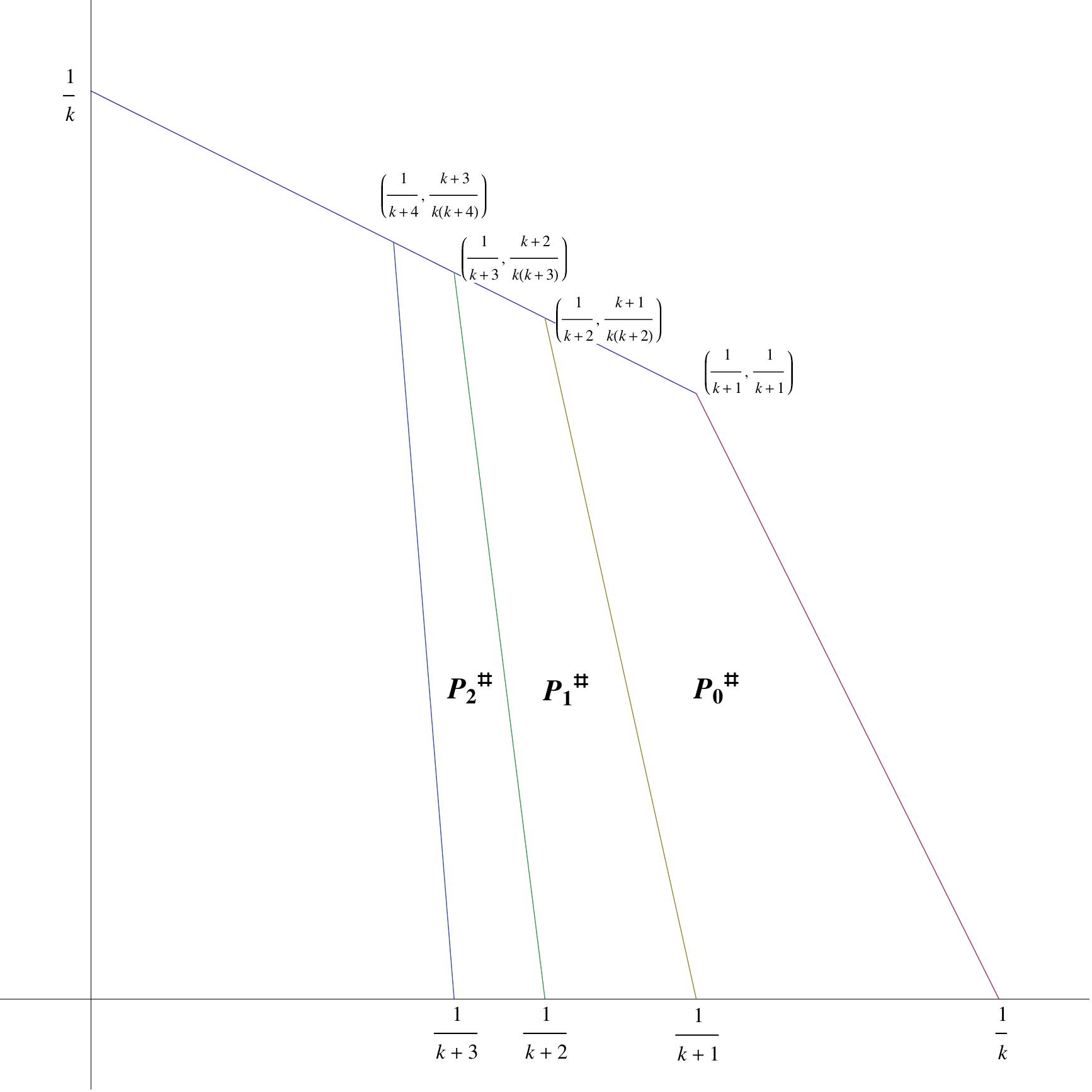}\\
\end{center}
\begin{center}
\includegraphics[scale=.4]{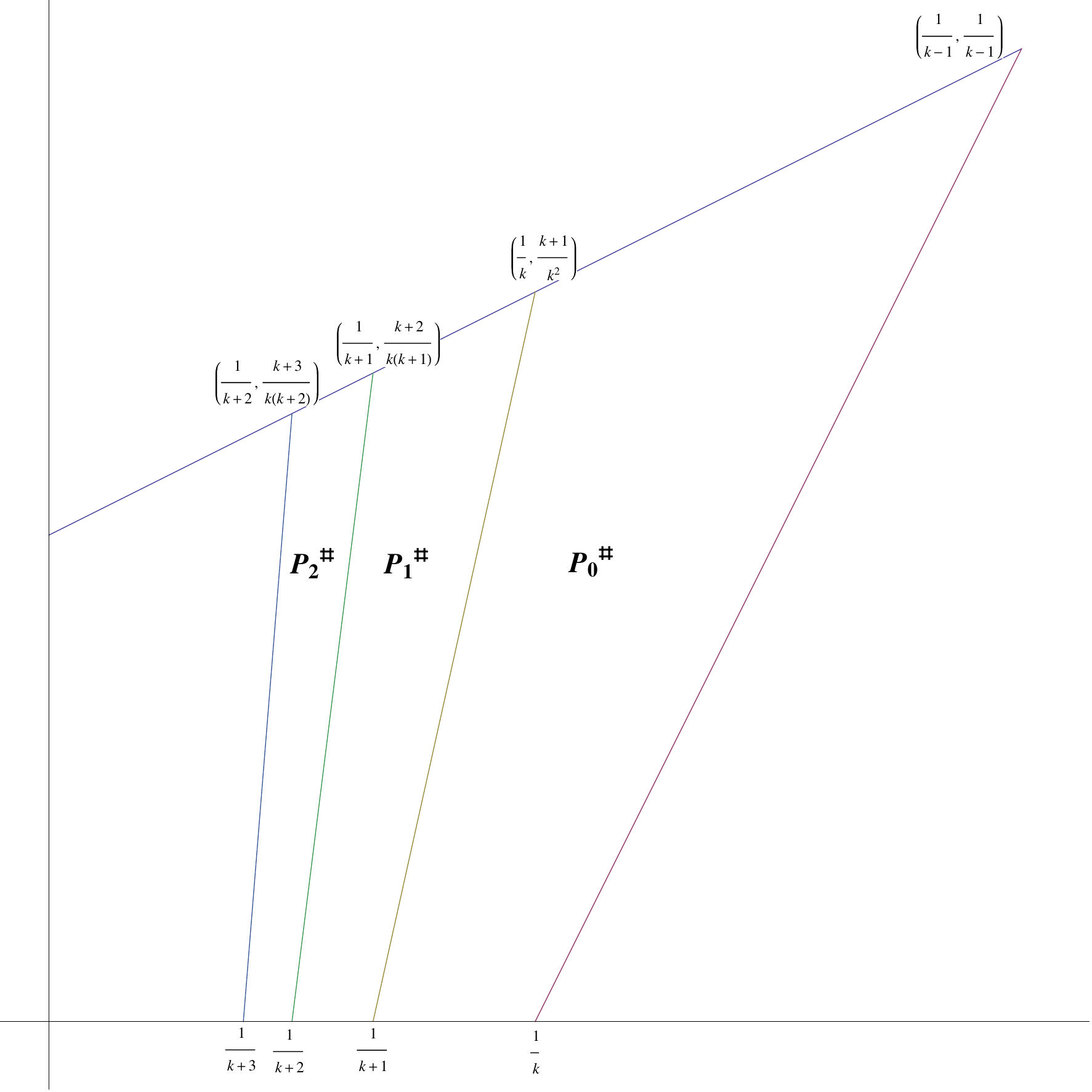}\\
\end{center}
\end{multicols}
\vspace{-1pc}
\hspace{7pc}$\Gamma_{(0,k)}$ \hspace{17pc} $\Gamma_{(1,k)}$\\

\noindent Next, fix $b \in \IZ^+$ and set $x_0 := \frac{(1-m)k+m{b}}{b+k}$. We use the definition \eqref{Psi} of $\Psi$ to first write $m - y = m - x + \frac{1}{u}$ and then express $v$ in terms of $u$ as
\begin{equation}\label{v=f(u)}
v = \frac{u}{(2m-1)k}(m-x_0)\left(m-x_0+\frac{1}{u}\right) = \frac{1}{b+k}\left(\frac{(2m-1)k}{b+k}u+1\right).
\end{equation}
Since $\Psi$ is a homeomorphism, this allows us to conclude that $\Psi$ maps $f_b$ to an open segment of a line in the $u{v}$ plane, which is the reflection of the line $p_b^\#$ along the diagonal $u-v=0$. In particular, $P_a^\# \cap F_b^\#$ is the region interior to the quadrangle with vertices $p_a^\# \cap f_b^\#, \tab p_{a+1}^\# \cap f_b^\#, \tab p_a^\# \cap f_{b+1}^\#$ and $p_{a+1}^\# \cap f_{b+1}^\#$. But $(x_0,y_0) = p_a \cap f_b$, so that $(u_0,v_0) := \Psi(x_0,y_0) = p_a^\# \cap f_b^\#$. The definition \eqref{Psi} of $\Psi$ and  formula \eqref{v=f(u)} now yield  
\[u_0 = \frac{1}{x_0-y_0} = \frac{b+k}{(1-2m)k + (a+k)(b+k)}\]
and
\[v_0 = \frac{a+k}{(1-2m)k + (a+k)(b+k)}\]
as desired. 
\end{proof}

\newpage

\begin{multicols}{2} 
\begin{center}
\includegraphics[scale=.47]{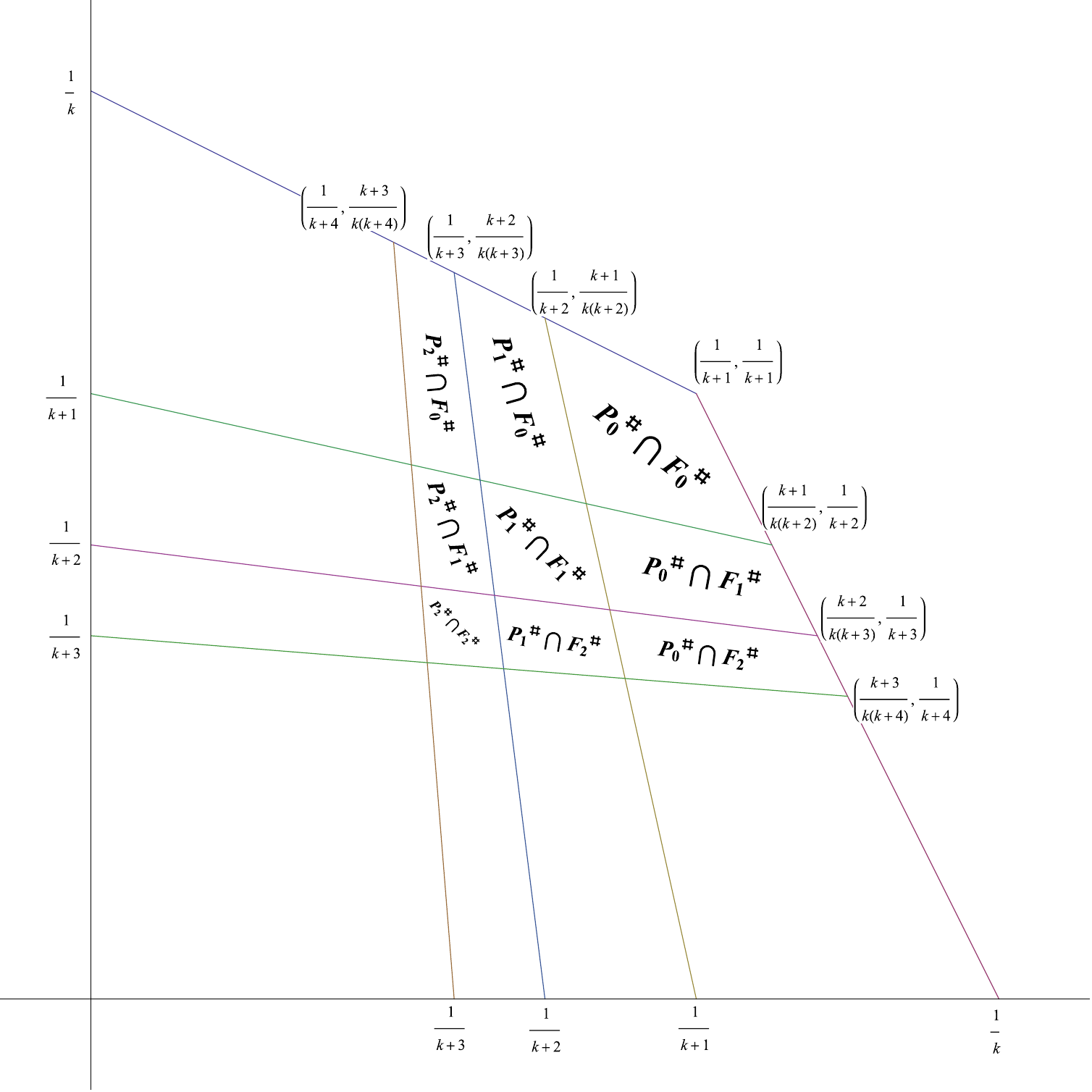}
\end{center}
\begin{center}
\includegraphics[scale=.48]{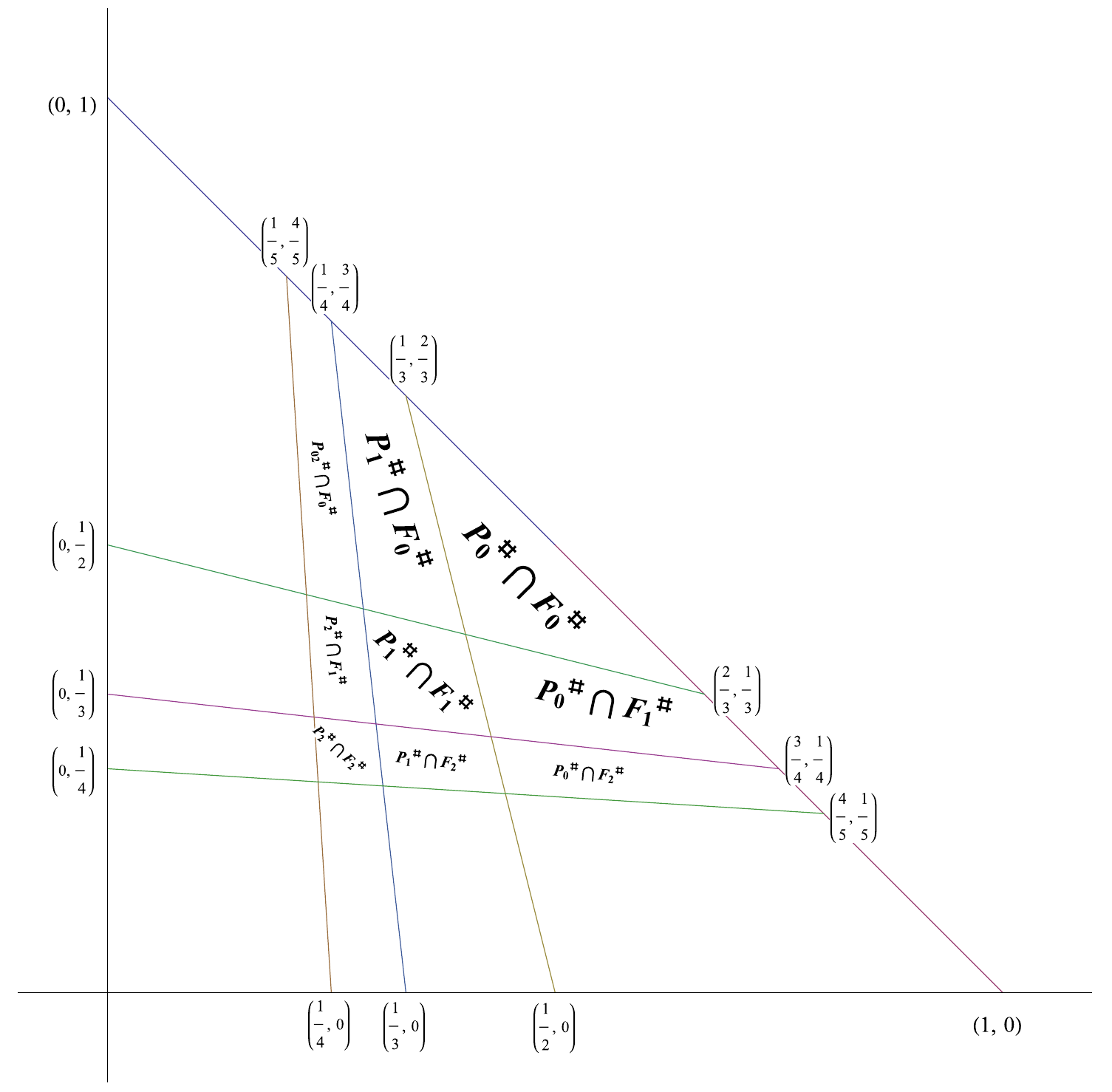}\\
\end{center}
\end{multicols}
\vspace{-1pc}
\hspace{5pc}$\Gamma_{(0,k)}$ revisited \hspace{16pc} $\Gamma_{(0,1)}$\\

\begin{remark}
This result was first proved for the classical regular continued fractions expansion, corresponding to the parameters $m=0$ and $k=1$ by Jager and Kraaikamp \cite{JK} (see also \cite[exercise 5.3.4]{DK}). For this case, the region $P_0^\# \cap F_0^\#$ degenerates to the interior of the triangle with vertices $\left(\frac{1}{3},\frac{2}{3}\right), \tab \left(\frac{2}{3},\frac{1}{3}\right)$ and $\left(\frac{2}{5},\frac{2}{5}\right)$. In these papers, the regions $P_a^\#$ and $F_a^\#$ are denoted by $V_b^*$ and $H_b^*$ respectively, where $b=a+1$. We diverge from their notational choice to better illustrate the dynamical structure at hand: the point $(\theta_{n-1},\theta_n) \in P_a^\#$ precisely when the first digit $a_n$ in $Y_n$, the past of $x_0$ at time $n$, is $a$. Similarly, the point $(\theta_{n-1},\theta_n) \in F_a^\#$ precisely when the first digit $a_{n+1}$ in $x_n$, the future representation of $x_0$ at time $n$, is $a$.   
\end{remark}

\noindent   When $m=1$, we obtain the depicted Spaces of Jager Pairs:
 
\begin{multicols}{2} 
\begin{center}
\includegraphics[scale=.47]{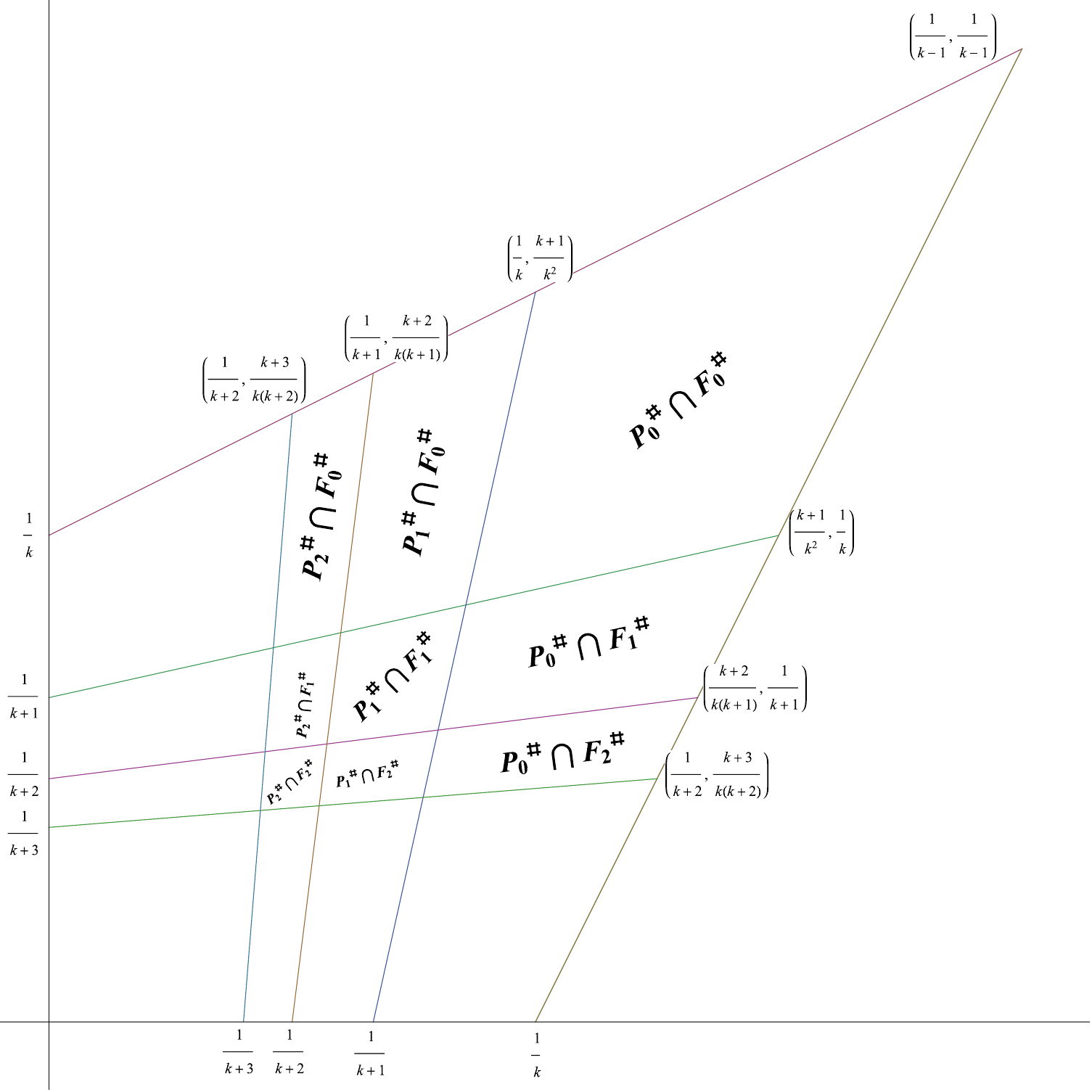}
\end{center}
\begin{center}
\includegraphics[scale=.48]{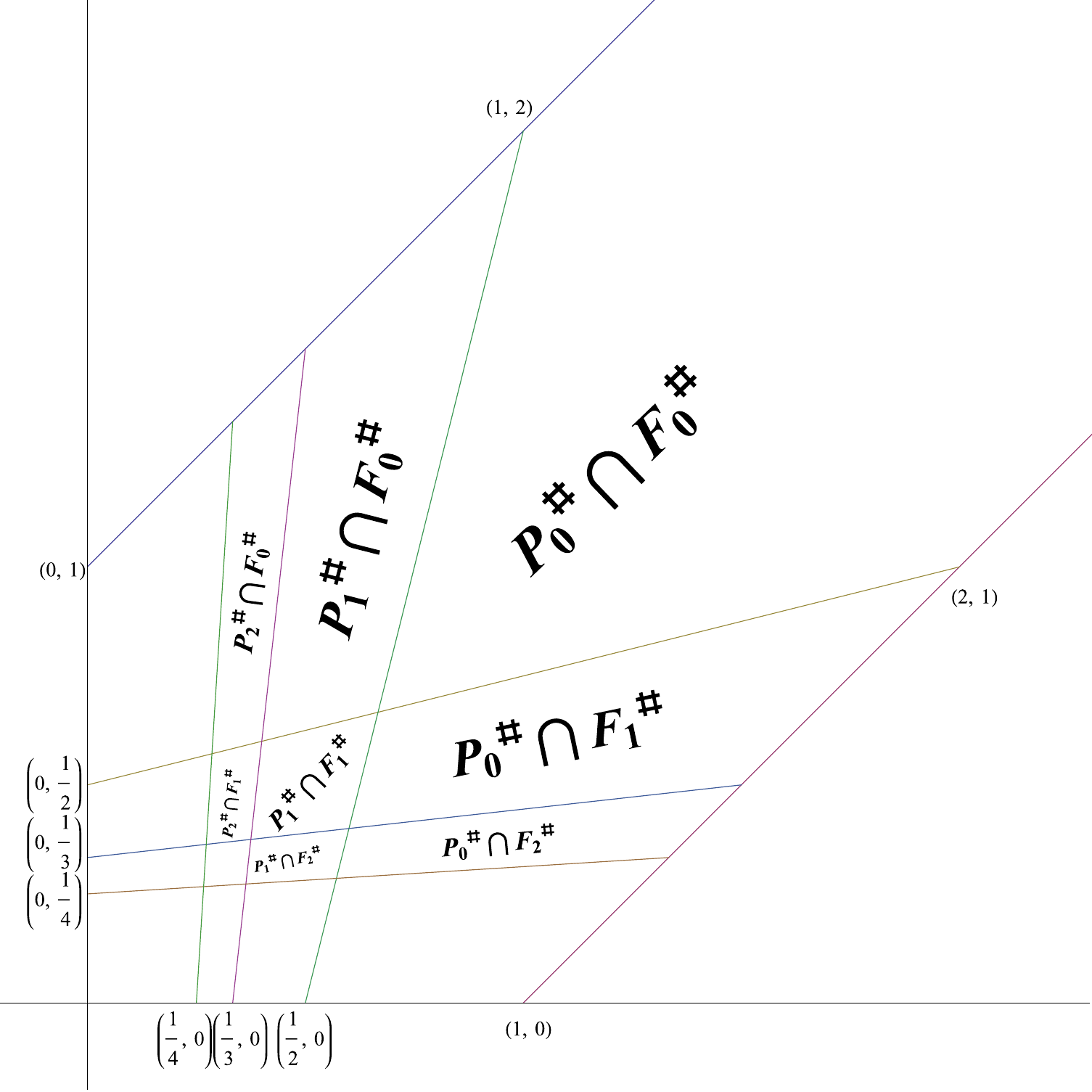}
\end{center}
\end{multicols}
\vspace{-1pc}
\hspace{5pc}$\Gamma_{(1,k)}$ revisited \hspace{16pc} $\Gamma_{(1,1)}$\\

\noindent The picture for the classical backwards continued fractions $k=1$, is obtained after we let $k \to 1^+$ and use the equicontinuity of the family $\{\Psi_{(1,k)}\}_{k>1}$. For this case, the region $P_0^\# \cap F_0^\#$ expands to the unbounded region in the $u{v}$ plane, which is the intersection of the regions $u-v>1, \tab v-u<1, \tab  4u-v>2$ and $4v-u>2$.

\section{Bounding growth rate in Jager Pairs} 

This symmetry in the subdivisions will allow us to provide uniform bounds for the rate of growth of the sequence $\{\theta_n\}_1^\infty$, which is the sequence whose members are the Jager Pair differences $\{\abs{\theta_{n+1}-\theta_n}\}_1^\infty$. But first, we already know that $\Gamma_{(1,1)}$ is the unbounded region in the first quadrant of the $u{v}$ plane bounded by the lines $u-v < 1$ and  $v-u < 1$, hence we obtain at once:
\begin{theorem}
In general, the sequence of approximation coefficients associated with the classical backwards continued fraction expansion $m=k=1$ has no uniform upper bound. However its growth rate is uniformly bounded by 1.
\end{theorem}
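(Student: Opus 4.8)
The plan is to read off both assertions directly from the explicit description of $\Gamma_{(1,1)}$ recalled immediately before the statement, combined with the fact that the Space of Jager Pairs is a dense subset of $\Gamma_{(1,1)}$. Recall that $\Gamma_{(1,1)}$ is the open region of the first quadrant of the $uv$-plane cut out by the two inequalities $u-v<1$ and $v-u<1$; equivalently, it is the intersection of the open diagonal strip $\abs{u-v}<1$ with the open first quadrant. Every Jager pair $(\theta_n,\theta_{n+1})$ lies in $\Gamma_{(1,1)}$, and such pairs are dense therein. Both claims in the theorem are then essentially geometric observations about this region.

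For the first assertion (no uniform upper bound), I would exploit that $\Gamma_{(1,1)}$ is unbounded along the diagonal: for any $M>0$ the point $(M+1,M+1)$ satisfies $u=v$, hence lies in $\Gamma_{(1,1)}$. Because the Jager pairs are dense in this unbounded region, there is an actual pair $(\theta_n,\theta_{n+1})$ within distance $1$ of $(M+1,M+1)$, so that $\theta_n>M$. Since $M$ is arbitrary, $\sup_{x_0,n}\theta_n=\infty$, and no uniform upper bound on the sequence of approximation coefficients can exist.

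For the second assertion (growth rate bounded by $1$), I would simply invoke the containment $\Gamma_{(1,1)}\subseteq\set{(u,v)}{\abs{u-v}<1}$, which forces every Jager pair to satisfy $\abs{\theta_{n+1}-\theta_n}<1$. This inequality holds for all admissible initial seeds $x_0\in(0,1)-\IQ_{(1,1)}$ and all indices $n\ge 1$, which is precisely the uniform bound asserted. To see that the constant $1$ is sharp, I would approach the boundary line $v-u=1$ (or $u-v=1$) from inside $\Gamma_{(1,1)}$ and use density to produce genuine Jager pairs whose differences tend to $1$.

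The two assertions are thus almost immediate consequences of the shape of $\Gamma_{(1,1)}$, and no genuine computation is required. The only place demanding care is the density step in the first part, where one must convert unboundedness of the limit region into unboundedness of the actual approximation coefficients. This obstacle dissolves once one notes that a dense subset of an unbounded set is itself unbounded: every far-away point of $\Gamma_{(1,1)}$ has a true Jager pair nearby, so the supremum of the $\theta_n$ is genuinely infinite.
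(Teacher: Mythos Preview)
Your proposal is correct and matches the paper's approach exactly: the paper presents this theorem as an immediate consequence of the description of $\Gamma_{(1,1)}$ as the unbounded strip $\abs{u-v}<1$ in the first quadrant (it literally writes ``hence we obtain at once'' and gives no further proof), and you have simply spelled out the implicit density argument for the unboundedness and the containment argument for the growth-rate bound. Your added remark on sharpness of the constant $1$ is a harmless bonus not stated in the paper.
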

\noindent Our main result is:
\begin{theorem}\label{main}
Given $m \in \{0,1\}, \tab k \in \IR > 1, \tab x_0 \in (0,1) - \IQ_{(m,k)}$ and $N \ge 1$, we write $l := \min\{a_{N+1},a_{N+2},a_{N+3}\}$ and $L :=  \max\{a_{N+1},a_{N+2},a_{N+3}\}$, where $a_n$ is the digit at time $n$ in the $(m,k)-$expansion for $x_0$. Then the inequality
\[\left(\theta_{N+1} - \theta_N\right)^2 + \left(\theta_{N+2} - \theta_{N+1}\right)^2 < 2\tab\left(\frac{L-l+1}{(1-2m)k + (l+k)(L+k+1)}\right)^2\]
is sharp. Furthermore, if $L-l>1$, then the inequality
\[\left(\theta_{N+1} - \theta_N\right)^2 + \left(\theta_{N+2} - \theta_{N+1}\right)^2 > 2\tab\left(\frac{L-l}{(1-2m)k + (l+k+1)(L+k)}\right)^2\]
is sharp.
\end{theorem}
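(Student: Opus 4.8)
The plan is to read the left-hand side as a squared Euclidean distance in the plane of Jager pairs and to reduce everything to the explicit quadrangles of Proposition \ref{P_a_cap_F_b}. First I would write $A := (\theta_N,\theta_{N+1})$ and $B := (\theta_{N+1},\theta_{N+2})$, so that $(\theta_{N+1}-\theta_N)^2+(\theta_{N+2}-\theta_{N+1})^2 = \norm{A-B}^2$. Since the distance of a point $(u,v)$ from the diagonal $\{u=v\}$ is $\abs{u-v}/\sqrt2$, this equals $2\big(d(A)^2+d(B)^2\big)$, where $d(\cdot)$ denotes distance to the diagonal. By \eqref{theta_a}, $A$ lies in the subdivision $P_{a_{N+1}}^\#\cap F_{a_{N+2}}^\#$ and $B$ in $P_{a_{N+2}}^\#\cap F_{a_{N+3}}^\#$; as all three indices lie in $\{l,\dots,L\}$, both points lie in the union of the quadrangles whose two indices are confined to this range. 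I would then record, from the proof of Proposition \ref{P_a_cap_F_b}, that $f_b^\#$ is the reflection of $p_b^\#$ in the diagonal, so the whole partition of $\Gamma$ is symmetric under $(u,v)\mapsto(v,u)$, and that the distance of any vertex $p_a^\#\cap f_b^\#$ from the diagonal is $\tfrac{1}{\sqrt2}\cdot\frac{\abs{b-a}}{(1-2m)k+(a+k)(b+k)}$, read directly off its coordinates.

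Because $d(\cdot)$ is the absolute value of an affine function, its extreme values over each closed quadrangle are attained at vertices, so over all admissible quadrangles the relevant extrema of $d$ occur at the explicitly listed vertices with indices in $\{l,\dots,L+1\}$. For the upper bound I would maximise $d$: among these vertices the ratio $\frac{\abs{b-a}}{(1-2m)k+(a+k)(b+k)}$ is largest at $p_l^\#\cap f_{L+1}^\#$ (maximal index gap against minimal denominator), which is indeed a corner of the admissible quadrangle $P_l^\#\cap F_L^\#$. This gives $d \le \tfrac{1}{\sqrt2}\cdot\frac{L-l+1}{(1-2m)k+(l+k)(L+k+1)}$ for each of $A$ and $B$; the inequalities are strict since $A,B$ lie in open subdivisions, and substituting into $\norm{A-B}^2 = 2\big(d(A)^2+d(B)^2\big)$ yields the stated strict upper bound. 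Sharpness I would then get by driving $A$ towards $p_l^\#\cap f_{L+1}^\#$ and $B$ towards its mirror image $p_{L+1}^\#\cap f_l^\#$, invoking the density of Jager pairs in $\Gamma$.

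The lower bound is where the genuine work lies, and it explains the hypothesis $L-l>1$: when two consecutive indices coincide the corresponding point may sit on the diagonal ($d=0$), and the nearest off‑diagonal vertex $p_{l+1}^\#\cap f_L^\#$ has index gap only $L-l-1$, so a naive vertex minimisation produces merely a constant with $L-l-1$ in the numerator. To recover the sharper $\frac{L-l}{(1-2m)k+(l+k+1)(L+k)}$ I must use that $A$ and $B$ are not free points of their quadrangles. Starting from the given relation $x_n = m+\frac{k(1-2m)}{a_{n+1}+k+x_{n+1}}$ and the definition \eqref{Y_n} of $Y_n$, I would first establish that the Möbius map $\phi(t)=m+\frac{(1-2m)k}{a_{N+2}+k+t}$ carries $(x_{N+2},Y_{N+2})$ to $(x_{N+1},Y_{N+1})$ in both coordinates simultaneously. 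This lets me parametrise all admissible pairs $(A,B)$ by $(x_{N+2},Y_{N+1})$ ranging over the honest rectangle $\Delta_{a_{N+3}}\times(m-a_{N+1}-k-1,\,m-a_{N+1}-k)$, and, through \eqref{Psi}, to write $\norm{A-B}^2$ as a single rational function on that rectangle whose extrema I would then locate.

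The main obstacle, and the crux of the whole theorem, is exactly this coupling. The configuration that a vertex would suggest (a point reaching the diagonal, or reaching $p_{l+1}^\#\cap f_L^\#$) is dynamically inaccessible because it forces $x_{N+2}\to 0$ or $\to 1$, hence an out‑of‑range digit $a_{N+3}$; consequently the true extremum is pushed into the interior of an edge of the rectangle and must be found by the calculus of the rational function rather than by reading off a lattice vertex. I expect the delicate step to be verifying that the extremising corner of this rectangle reproduces the mirror‑symmetric configuration $\theta_{N+2}=\theta_N$, so that $\norm{A-B}^2 = 2(\theta_{N+1}-\theta_N)^2$ collapses to twice a single squared difference, and checking that the resulting value is precisely the claimed constant — it is this coupled analysis, rather than the clean geometry of the quadrangles, that forces the shifted denominator $(l+k+1)(L+k)$ and the numerator $L-l$.
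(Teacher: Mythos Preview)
Your upper-bound argument is correct and is a neat repackaging of the paper's. The paper works through an auxiliary lemma: with $a=\min\{a_{N+1},a_{N+2}\}$, $A=\max\{a_{N+1},a_{N+2}\}$, $b=\min\{a_{N+2},a_{N+3}\}$, $B=\max\{a_{N+2},a_{N+3}\}$, both Jager points lie in the convex quadrangle $R=\bigcup_{a\le i\le A,\ b\le j\le B}P_i^\#\cap F_j^\#$, so $\|A-B\|\le\operatorname{diam}(R)$; the diameter is then identified as the diagonal joining two vertices that are reflections of one another in $u=v$, and a final relaxation $a,b\ge l$, $A,B\le L$ gives the stated constant. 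Since that diagonal equals $2d_{\max}$ in your notation, your bound $2\bigl(d(A)^2+d(B)^2\bigr)\le 4d_{\max}^2$ and the paper's $\|A-B\|^2\le\operatorname{diam}(R)^2$ coincide. (Your sharpness sketch needs a word of care: $A$ and $B$ share the coordinate $\theta_{N+1}$ and cannot be moved independently; the mirror symmetry you invoke is exactly what makes the target pair compatible.)

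For the lower bound there is a genuine gap. You correctly see that the distance-to-diagonal decomposition collapses because one of $d(A),d(B)$ can vanish, but the dynamical coupling and rational-function optimisation you then outline is neither carried out nor clearly tractable, and you explicitly leave the decisive step unverified. The paper's argument avoids all of this and stays purely geometric: inside $R$ it places the \emph{inner} quadrangle $r=\bigcup_{a+1\le i\le A-1,\ b+1\le j\le B-1}P_i^\#\cap F_j^\#$, obtained by peeling off the outermost layer of subdivisions, and observes that the two Jager points always sit in \emph{opposite} corner cells of the annulus $R\setminus r$ --- simply because $a_{N+1},a_{N+2}$ are by definition the extreme values $a,A$, and $a_{N+2},a_{N+3}$ the extreme values $b,B$, so each point occupies an extremal-index subdivision. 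A short coordinate check then shows that one point lies beyond one diagonal vertex of $r$ and the other beyond the opposite vertex, whence $\|A-B\|\ge\operatorname{diam}(r)$. Computing that diameter from the vertex formulas of Proposition~\ref{P_a_cap_F_b} and relaxing $a,b,A,B$ to $l,L$ produces the stated constant, with the shifted denominator $(l+k+1)(L+k)$ and numerator $L-l$ arising directly from the coordinates of the vertices of $r$ rather than from any M\"obius coupling or interior-critical-point analysis.
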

\noindent In order to prove this theorem, we will first prove:
\begin{lemma}
Given $m \in \{0,1\}, \tab k \in \IR > 1, \tab x_0 \in (0,1) - \IQ_{(m,k)}$ and $N \ge 1$, write $a := \min\{a_{N+1},a_{N+2}\}, \tab A :=  \max\{a_{N+1},a_{N+2}\}, \tab b :=  \min\{a_{N+2},a_{N+3}\}$ and $B :=  \max\{a_{N+2},a_{N+3}\}$. Then the inequality
\[\left(\theta_{N+1} - \theta_N\right)^2 + \left(\theta_{N+2} - \theta_{N+1}\right)^2\]
\[<  \left(\frac{B+k+1}{(1-2m)k + (a+k)(B+k+1)} - \frac{b+k}{(1-2m)k + (b+k)(A+k+1)}\right)^2\]
\[ + \left(\frac{A+k+1}{(1-2m)k + (b+k)(A+k+1)}- \frac{a+k}{(1-2m)k + (a+k)(B+k+1)}\right)^2\] 
is sharp. Furthermore, if $\max\{A-a,B-b\}>1$, then the inequality 
\[\left(\theta_{N+1} - \theta_N\right)^2 + \left(\theta_{N+2} - \theta_{N+1}\right)^2\]
\[>  \left(\frac{B+k}{(1-2m)k + (a+k+1)(B+k)} - \frac{b+k+1}{(1-2m)k + (b+k+1)(A+k)}\right)^2\]
\[ + \left(\frac{A+k}{(1-2m)k + (b+k+1)(A+k)}- \frac{a+k+1}{(1-2m)k + (a+k+1)(B+k)}\right)^2\]
is sharp. 
\end{lemma}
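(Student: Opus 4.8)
The plan is to read the target as a squared Euclidean distance between two consecutive Jager pairs and then to extremize it over the explicit quadrangles of Proposition \ref{P_a_cap_F_b}. Writing $c_1 = a_{N+1}$, $c_2 = a_{N+2}$, $c_3 = a_{N+3}$ and using \eqref{theta_a}, I would first record that $Q_1 := (\theta_N,\theta_{N+1}) \in P_{c_1}^\# \cap F_{c_2}^\#$ and $Q_2 := (\theta_{N+1},\theta_{N+2}) \in P_{c_2}^\# \cap F_{c_3}^\#$, and then observe that $Q_2 - Q_1 = (\theta_{N+1}-\theta_N,\ \theta_{N+2}-\theta_{N+1})$, so that the left-hand side of each inequality is exactly $\|Q_2-Q_1\|^2$. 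In this language both statements assert that the distance between the two subdivisions containing $Q_1$ and $Q_2$ is pinched between the distances realized by two prescribed pairs of their vertices.

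The geometric engine is the reflection symmetry isolated inside the proof of Proposition \ref{P_a_cap_F_b}: since $f_j^\#$ is the mirror image of $p_j^\#$ across the diagonal $u=v$, the subdivision $P_\alpha^\# \cap F_\beta^\#$ is the reflection of $P_\beta^\# \cap F_\alpha^\#$. Consequently any quantity built from separations transverse to the diagonal depends only on the unordered pairs $\{c_1,c_2\}$ and $\{c_2,c_3\}$, which is precisely what the substitutions $a=\min\{c_1,c_2\}$, $A=\max\{c_1,c_2\}$, $b=\min\{c_2,c_3\}$, $B=\max\{c_2,c_3\}$ encode. I would use this symmetry to transport the two admissible points into the two canonical mixed quadrangles $P_a^\# \cap F_B^\#$ and $P_A^\# \cap F_b^\#$, after which the problem reduces to estimating, from above and below, the distance between one point of each of these two explicit convex quadrangles.

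Because the four vertices of each quadrangle are supplied verbatim by Proposition \ref{P_a_cap_F_b}, the extremal distances are attained at vertex pairs, and the computation is then a substitution. For the upper estimate I would take the two outer corners, $V_{01}$ of $P_a^\# \cap F_B^\#$ and $V_{10}$ of $P_A^\# \cap F_b^\#$; inserting their coordinates reproduces the claimed right-hand side. For the lower estimate I would take the two inner corners, $V_{10}$ of $P_a^\# \cap F_B^\#$ and $V_{01}$ of $P_A^\# \cap F_b^\#$; here the hypothesis $\max\{A-a,B-b\}>1$ is exactly the condition forcing the two quadrangles to be disjoint rather than to straddle the diagonal, so that the minimal separation is positive and realized at the inner pair (when $\max\{A-a,B-b\}\le 1$ the quadrangles abut and the infimum collapses to $0$, which is why the lower bound is asserted only in the separated regime). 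Both estimates are strict because every subdivision is open, and sharpness follows from the density of the space of Jager pairs in $\Gamma$ together with the equicontinuity of $\{\Psi_{(m,k)}\}$: admissible pairs $(Q_1,Q_2)$ accumulate at the selected vertices, so the bounds are approached but never attained.

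The step I expect to be the main obstacle is the reduction in the second paragraph, namely justifying that the supremum and infimum of $\|Q_2-Q_1\|^2$ over genuinely admissible configurations are governed by the stated vertices of $P_a^\# \cap F_B^\#$ and $P_A^\# \cap F_b^\#$. The delicacy is that $Q_1$ and $Q_2$ are not independent: they share the middle coordinate $\theta_{N+1}$ (the ordinate of $Q_1$ equals the abscissa of $Q_2$), and they are further coupled because the passage from $(x_{N+1},Y_{N+1})$ to $(x_{N+2},Y_{N+2})$ acts by $T_{(m,k)}$ on the future coordinate and by the induced past transformation on the other. Tracking how the diagonal reflection interchanges these two constraints, and verifying in each ordering of $c_1,c_2,c_3$ that the extreme vertex is genuinely a limit of honest Jager pairs, is where the real work lies; the remaining vertex-coordinate manipulations are routine substitutions into the formulas of Proposition \ref{P_a_cap_F_b}.
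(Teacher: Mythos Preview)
Your reading of the left-hand side as $\|Q_2-Q_1\|^2$ is correct and matches the paper. But the route you propose diverges from the paper's at the second step, and the divergence is where your acknowledged ``main obstacle'' becomes a real gap.

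The paper does \emph{not} place $Q_1$ and $Q_2$ in two separate subdivisions and then compare those two cells. Instead it observes that the indices of $Q_1=(\theta_N,\theta_{N+1})\in P_{a_{N+1}}^\#\cap F_{a_{N+2}}^\#$ satisfy $a_{N+1}\in\{a,A\}$ and $a_{N+2}\in\{b,B\}$, and likewise for $Q_2$, so that \emph{both} Jager pairs lie in the single convex quadrangle
\[
R:=\bigcup_{a\le i\le A,\; b\le j\le B} P_i^\#\cap F_j^\#,
\]
bounded by segments of $p_a^\#,p_{A+1}^\#,f_b^\#,f_{B+1}^\#$. The upper bound is then immediate: two points of $R$ are at most $\operatorname{diam}(R)$ apart, and a short slope analysis (the lines $p_c^\#$ and $f_c^\#$ have slopes on opposite sides of $\pm 1$) identifies the diameter as the diagonal through the two acute vertices of $R$, whose endpoints are exactly the ones producing the stated right-hand side. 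For the lower bound one peels off the outer layer to form the inner quadrangle $r$ (over indices $a{+}1\le i\le A{-}1$, $b{+}1\le j\le B{-}1$), notes that $Q_1$ and $Q_2$ sit in \emph{opposite} corner cells of $R$ (this is where $\max\{A-a,B-b\}>1$ enters), and concludes that their separation dominates $\operatorname{diam}(r)$. Sharpness follows from density of Jager pairs in $\Gamma$.

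Your reflection idea cannot replace this. Reflecting across $u=v$ sends $P_\alpha^\#\cap F_\beta^\#$ to $P_\beta^\#\cap F_\alpha^\#$, but $R$ itself is \emph{not} symmetric under this reflection unless $\{a,A\}=\{b,B\}$, so you cannot transport an arbitrary configuration into the ``canonical'' pair $P_a^\#\cap F_B^\#$, $P_A^\#\cap F_b^\#$ by a single isometry; applying different reflections to $Q_1$ and $Q_2$ destroys $\|Q_2-Q_1\|$. Concretely, when $a_{N+1}<a_{N+2}<a_{N+3}$ one finds $Q_1\in P_a^\#\cap F_b^\#$ and $Q_2\in P_A^\#\cap F_B^\#$, the other pair of opposite corners, and no reflection moves this to your canonical pair while preserving the distance. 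Your lower-bound heuristic is also incomplete: disjointness of two open quadrangles does not force the infimum of pairwise distances to be realized at a vertex pair, nor to be positive; the paper's argument works because the two corner cells lie on opposite sides of the convex body $r$, so the segment $Q_1Q_2$ must cross $r$ and hence has length at least the relevant diagonal of $r$. If you reorganize your argument around the single region $R$ and its inner peel $r$, the coupling $(\theta_{N+1}$ shared$)$ and the dynamics play no role, and the obstacle you flagged disappears.
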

\begin{proof}
Let $R_{(m,k,a,b,A,B)} = R$ be the set
\[\bigcup_{a \le i \le A, \tab b \le j \le B}(P_i^\# \cap F_j^\#).\] 
Formula \eqref{theta_a} yields $(\theta_N,\theta_{N+1}) \in P_{a_N+1}^\# \cap F_{a_{N+2}}^\# \subset R$ and $(\theta_{N+1},\theta_{N+2}) \in P_{a_{N+2}}^\# \cap F_{a_{N+3}}^\# \subset R$. From proposition \ref{P_a_cap_F_b}, we see that $R$ is the interior of the bounded convex quadrangle, consisting of sections from the line segments $p_a^\#, \tab p_{A+1}^\#, \tab f_b^\#$ and $f_{B+1}^\#$. Unless $\max\{A-a,B-b\} \le 1$, we also let $r_{(m,k,a,b,A,B)} = r$ be the non-empty set
\[\bigcup_{a+1 \le i \le A-1, \tab b+1 \le j \le B-1}(P_i^\# \cap F_j^\#),\]
so that 
\begin{equation}\label{R-r}
\{(\theta_N,\theta_{N+1}),(\theta_{N+1},\theta_{N+2})\}\subset R-r. 
\end{equation}
From the same proposition, we see that $r \subset R$ is the interior of the convex quadrangle, consisting of sections from the segments $p_{a+1}^\#, \tab p_A^\#, \tab f_{b+1}^\#$ and $f_B^\#$, that is, the quadrangle $r$ is obtained from peeling the outer layer of $R$'s subdivisions.\\ 

\noindent When $m=0$, let $(u_0,v_0) := p_A^\# \cap f_{b+1}^\#$ and $(U_0,V_0) := p_{a+1}^\# \cap f_B^\#$. Then
\[(u,v) \in P_A^\# \cap F_b^\# \implies u < u_0 \text{ and } v > v_0\]
and 
\[(u,v) \in P_a^\# \cap F_B^\# \implies u > U_0 \text{ and } v < V_0.\]
When $m=1$, let $(u_1,v_1) := p_A^\# \cap f_B^\#$ and $(U_1,V_1) := p_{a+1}^\# \cap f_{b+1}^\#$. Then
\[(u,v) \in P_a^\# \cap F_b^\# \implies u < u_0 \text{ and } v < v_0\]
and 
\[(u,v) \in P_A^\# \cap F_B^\# \implies u > U_0 \text{ and } v > V_0.\] 
In tandem with formula \eqref{R-r}, this allows us to conclude that, in either case, we have 
\begin{equation}\label{distance}
\operatorname{diam}(r)^2 < d\left((\theta_N,\theta_{N+1}), (\theta_{N+1},\theta_{N+2})\right) = (\theta_N - \theta_{N+1})^2 +  (\theta_{N+1} - \theta_{N+2})^2 < \operatorname{diam}(R)^2,
\end{equation}
where $d$ is the familiar distance formula between two points in $\IE^2$ and $ \operatorname{diam}(R)$ stands for the euclidean diameter of the region $R$.\\ 

\noindent The diameter of the interior of a convex quadrangle in $\IE^2$ with opposite acute angles is the length of the diagonal connecting the vertices corresponding to these acute angles. To verify this simple observation from plane geometry, observe that from the continuity of the euclidean distance formula, the diameter of a convex polygon is either a side or a diagonal. The diagonal connecting the two acute angles will lie opposite to an obtuse angle in either one of the induced triangles obtained. Since for any triangle, the longest side lies opposite to the largest angle, we conclude that this diagonal is longer than all of the four sides in this convex quadrangle. To verify it is longer than the other diagonal, consider one of the four induced triangles obtained after drawing both diagonals and use the same opposite angle argument.\\

\noindent When $m=0$ and $c \in \IZ^+$, the slope of the line containing $p_c^\#$ lie in $(-\infty,-1)$ whereas the slope of the line containing $f_c^\#$ lie in $(-1,0)$. Conclude that both $R$ and $r$ have opposite acute angles, so that their diameters are the length of the diagonal connecting the vertices $p_a^\# \cap f_{B+1}^\#$ with $p_{A+1}^\#\ \cap f_{b}^\#$ and $p_{a+1}^\# \cap f_B^\#$ with $p_A^\#\ \cap f_{b+1}^\#$ respectively. Hence 
\[\operatorname{diam(R)} = \operatorname{d}(p_a^\# \cap f_{B+1}^\#, p_{A+1}^\# \cap f_b^\#)\] 
and
\[\operatorname{diam(r)} = \operatorname{d}(p_{a+1}^\# \cap f_B^\#, p_A^\# \cap f_{b+1}^\#).\]\\

\noindent When $m=1$, the slope of the line containing $p_c^\#$ lie in $(1,\infty)$ whereas the slope of the line containing $f_c^\#$ lie in $(0,1)$. Conclude that both $R$ and $r$ has opposite acute angles, so that their diameters are the length of the diagonal connecting the vertices $p_a^\# \cap f_b^\#$ with $p_{A+1}^\#\ \cap f_{B+1}^\#$ and $p_{a+1}^\# \cap f_{b+1}^\#$ with $p_A^\#\ \cap f_B^\#$ respectively. Hence 
\[\operatorname{diam(R)} = \operatorname{d}(p_a^\# \cap f_b^\#, p_{A+1}^\# \cap f_{B+1}^\#)\] 
and 
\[\operatorname{diam(r)} = \operatorname{d}(p_{a+1}^\# \cap f_{b+1}^\#, p_A^\# \cap f_B^\#).\]\\
In either case, we have 
\[\operatorname{diam(R)}^2 = \left(\frac{B+k+1}{(1-2m)k + (A+k+1)(B+k+1)} - \frac{b+k}{(1-2m)k+ (a+k)(b+k)}\right)^2\] 
\[+ \left(\frac{A+k+1}{(1-2m)k + (A+k+1)(B+k+1)} - \frac{a+k}{(1-2m)k + (a+k)(b+k)}\right)^2\]
and
\[\operatorname{diam(r)}^2 = \left(\frac{B+k}{(1-2m)k + (A+k)(B+k)} - \frac{b+k+1}{(1-2m)k+ (a+k+1)(b+k+1)}\right)^2\] 
\[+ \left(\frac{A+k}{(1-2m)k + (A+k)(B+k)} - \frac{a+k+1}{(1-2m)k + (a+k+1)(b+k+1)}\right)^2.\] 
Formula \eqref{distance} now yields the desired inequalities and the density of the space of Jager Pairs in $\Gamma_{(m,k)}$ establishes their sharpness.\\
\end{proof}

\begin{proof}[Proof (of theorem \ref{main}).]
For all integers $a,b,A$ and $B$ with $l \le a \le A \le L$ and $l \le b \le B \le L$, we have
\[\frac{A+k+1}{(1-2m)k + (b+k)(A+k+1)} \le \frac{L+k+1}{(1-2m)k + (l+k)(L+k+1)}\] 
and
\[\frac{l+k}{(1-2m)k + (l+k)(L+k+1)} \le \frac{a+k}{(1-2m)k + (a+k)(B+k+1)},\]
so that 
\[0 \le \frac{L-l}{(1-2m)k + (l+k+1)(L+k)}\] 
\[\le \frac{A+k+1}{(1-2m)k + (b+k)(A+k+1)} - \dfrac{a+k}{(1-2m)k + (a+k)(B+k+1)}\] 
\[\le \frac{L+k+1}{(1-2m)k + (l+k)(L+k+1)} - \frac{l+k}{(1-2m)k + (l+k)(L+k+1)}\]
\[ = \frac{L-l+1}{(1-2m)k + (l+k)(L+k+1)}.\] 
This argument remains identical after we exchange $a$ for $b$ and $A$ for $B$. Furthermore, setting $a=b:=l$ and $A=B:=L$ shows that we cannot replace these weak inequalities with strict ones. The lemma will now provide us with the result.
\end{proof}

\noindent This theorem will enable us to provide bounds for the rate of growth for the sequence of approximation coefficients between time $n$ and $n+1$, assuming a priori knowledge of bounds for the digits of expansion at times $n+1,\tab n+2$ and $n+3$, as expressed in the following corollary:
\begin{corollary}\label{result}
Assuming the hypothesis of the theorem, we have the inequality
\[ \max\left\{\abs{\theta_{N+1} - \theta_N}, \abs{\theta_{N+2} - \theta_{N+1}} \right\} < \sqrt{2}\tab\left(\frac{L-l+1}{(1-2m)k + (l+k)(L+k+1)}\right).\]
Furthermore, if $L-l>1$, we also have the inequality
\[ \min\left\{\abs{\theta_{N+1} - \theta_N}, \abs{\theta_{N+2} - \theta_{N+1}} \right\} > \sqrt{2}\tab\left(\frac{L-l}{(1-2m)k + (l+k+1)(L+k)}\right).\]
These results extend to the classical $k=1$ cases with one exception. The upper bound statement does not apply for the $m=1$ case when $a_{n+2}=0$ and either $a_{n+1}=0$ or $a_{n+3}=0$.
\end{corollary}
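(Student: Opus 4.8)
The plan is to read both inequalities off Theorem \ref{main}. Write $x:=\abs{\theta_{N+1}-\theta_N}$ and $y:=\abs{\theta_{N+2}-\theta_{N+1}}$, and let $z_N:=(\theta_N,\theta_{N+1})$ and $z_{N+1}:=(\theta_{N+1},\theta_{N+2})$ be the two consecutive Jager pairs, so that the left-hand side of the theorem is exactly $x^2+y^2$, the squared Euclidean distance between $z_N$ and $z_{N+1}$. Setting $\beta:=\frac{L-l+1}{(1-2m)k+(l+k)(L+k+1)}$ and $\alpha:=\frac{L-l}{(1-2m)k+(l+k+1)(L+k)}$, the theorem reads $2\alpha^2<x^2+y^2<2\beta^2$. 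The upper bound is then immediate: since $\max\{x,y\}^2\le x^2+y^2<2\beta^2$, taking square roots gives $\max\{x,y\}<\sqrt2\,\beta$, which is the first assertion.

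The lower bound is the delicate point, because $x^2+y^2>2\alpha^2$ does not by itself keep $\min\{x,y\}$ large: one large leg can carry the whole sum while the other collapses. I would therefore not try to extract the min-inequality from the sum alone, but bound each leg separately, using the geometric reading $x=\sqrt2\operatorname{dist}(z_N,\delta)$ and $y=\sqrt2\operatorname{dist}(z_{N+1},\delta)$, where $\delta$ is the diagonal $\{u=v\}$ and $\abs{v-u}=\sqrt2\operatorname{dist}((u,v),\delta)$. The task becomes showing that each of $z_N,z_{N+1}$ lies at distance more than $\alpha$ from $\delta$.

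The structural fact I would exploit is the coupling of the two pairs through the shared entry $\theta_{N+1}$, so that the abscissa of $z_{N+1}$ coincides with the ordinate of $z_N$. Together with \eqref{theta_a} and Proposition \ref{P_a_cap_F_b}, this confines $z_N$ and $z_{N+1}$ to the two extreme subdivisions of the block $R$ of the Lemma, on opposite sides of the diagonal-crossing inner block $r$; the per-coordinate estimate carried out in the proof of Theorem \ref{main}, namely that each coordinate gap is at least $\alpha$, would then bound $\operatorname{dist}(z_N,\delta)$ and $\operatorname{dist}(z_{N+1},\delta)$ below by $\alpha$. The main obstacle, and the step I expect to cost the most, is exactly this passage from the single distance $d(z_N,z_{N+1})$ governed by the theorem to the two individual distances to $\delta$: one has to invoke the coupling of abscissa and ordinate, not merely the membership $z_N,z_{N+1}\in R\setminus r$, in order to exclude the degenerate scenario in which one point drifts toward a diagonal vertex of its subdivision while the other compensates to keep the sum large.

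Finally, for the classical $k=1$ claim I would let $k\to1^+$ and invoke the equicontinuity of the families $\{\Psi_{(0,k)}\}$ and $\{\Psi_{(1,k)}\}$ recorded in Section 2, so that the strict bounds persist in the limit. The single exception is located by tracking which subdivisions blow up: for $m=1$ the subdivision $P_0^\#$ (equivalently $F_0^\#$) becomes unbounded as $k\to1^+$, so when $a_{n+2}=0$ together with $a_{n+1}=0$ or $a_{n+3}=0$ one of $z_N,z_{N+1}$ lies in an unbounded subdivision and the finite upper bound $\sqrt2\,\beta$ can fail, whereas every remaining case is covered by the limiting argument.
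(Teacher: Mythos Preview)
Your treatment of the upper bound and of the passage to $k=1$ via the equicontinuity of $\{\Psi_{(m,k)}\}$ coincides with the paper's: it too reads $\max\{x,y\}^2\le x^2+y^2<2\beta^2$ off Theorem~\ref{main} and then lets $k\to 1^+$, isolating exactly the subdivision $P_0^\#\cap F_0^\#$ as the obstruction when $m=1$.

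For the lower bound you have put your finger on a real issue. The paper's entire argument for $k>1$ is the sentence ``the proof is an immediate consequence of the theorem,'' and you are correct that $x^2+y^2>2\alpha^2$ does \emph{not} force $\min\{x,y\}>\sqrt{2}\,\alpha$: one leg can vanish while the other carries the sum. So the gap you identify is not a defect of your proposal relative to the paper; it is a gap the paper itself glosses over. Your instinct to abandon the sum and instead bound each of $\operatorname{dist}(z_N,\delta)$ and $\operatorname{dist}(z_{N+1},\delta)$ separately, using the reflection symmetry of the subdivisions across $u=v$ and the coupling of the two Jager pairs through the shared coordinate $\theta_{N+1}$, is the right kind of repair.

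That said, your proposal does not yet close the gap either: you flag the passage from $d(z_N,z_{N+1})$ to the two individual distances as ``the step I expect to cost the most'' and leave it as a sketch. To complete it you would need to show, from \eqref{theta_a} and Proposition~\ref{P_a_cap_F_b}, that under $L-l>1$ each of $z_N$ and $z_{N+1}$ lies in a subdivision whose closure is disjoint from the diagonal strip $\{\,|u-v|\le\sqrt{2}\,\alpha\,\}$; merely knowing $z_N,z_{N+1}\in R\setminus r$ is, as you note, insufficient. Until that is carried out, the lower-bound half of the corollary remains unproved in your write-up as well as in the paper's.
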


\begin{proof}
When $k>1$, the proof is an immediate consequence of the theorem. Plugging in $m=0$ and letting $k \to 1^+$ establishes the result for the classical Gauss case $m=0, k=1$. When $m=1$, we let $k \to 1^+$ and use the equicontinuity of the family $\{\Psi_{(1,k)}\}_{k>1}$ to obtain the result, once we exclude the possibility of the unbounded region $P_0^\# \cap F_0^\#$ from belonging to the region $R$, when computing the upper bound portion. This happens precisely when either $a_{n+1}=a_{n+2}=0$ or $a_{n+2}=a_{n+3}=0$. 
\end{proof}

\begin{remark}
The bounds given in the last section of the introduction are readily verified once we remember to translate the digits $b_2 = b_3 = 1$ and $b_4 = 3$ in the classical regular continued fraction expansion to $a_2 = a_3 =0$ and $a_4=2$ in the $(0,1)-$expansion.
\end{remark}

\section{Acknowledgments}

This paper is a development of part of the author's Ph.D. dissertation at the University of Connecticut, who has benefited tremendously from the patience and rigor his advisor Andy Haas and would like to thank him for all his efforts. He would also like to extend his gratitude to Alvaro Lozano-Robledo, whose suggestions had contributed to a more clear and elegant finished product.


\begin{thebibliography}{[88]}

\bibitem{Bourla} A. Bourla, \emph{Arithmetic diophantine approximation for continued fractions like maps on the interval}, preprint, arXiv: 1205.5002, 2012.

\bibitem{Thesis} A. Bourla,  \emph{The bi-sequence of approximation coefficients for Gauss-like and R\'enyi-like maps on the interval}, Ph.D. Thesis, University of Connecticut press, 2011.

\bibitem{Burger} E. B. Burger, \emph{Exploring the number jungle: A journey into diophantine analysis}, Amer. Math. Soc., Student Mathematical Library, V.8, 2000. 

\bibitem{CF} T. W. Cusick and M. E. Flahive, \emph{The Markoff and Lagrange spectra}, Mathematical Surveys and Monograms no. 30, Amer. Math. Soc., 1989.

\bibitem{DK}  K. Dajani and C. Kraaikamp, \emph{Ergodic theory of numbers}, The Carus Mathematical Monograms no. 29, Math. Asso .Amer. , 2002.

\bibitem{HM} A. Haas and D. Molnar, \emph{Metrical diophantine approximation for continued fractions like maps of the interval}, Transaction of the Amer. Math. Soc., Volume 356(7), 2003.

\bibitem{HM2} A. Haas and D. Molnar, \emph{The distribution of Jager pairs for continued fraction like mappings of the interval}, Pacific J. Math. 217(1), 2004. 

\bibitem{JK} H. Jager, C. Kraaikamp, \emph{On the approximations by continued fractions}, Nederl. Akad. Wetensch. Indag. Math. 51(3), p289-307, 1989.

\bibitem{JP} W. B. Jurkat and A. Peyerimhoff, \emph{Characteristic approximation properties of quadratic irrationals}, Internat. J. Math. \& Math. Sci.(1), 1978.

\bibitem{Perron} O. Perron, \emph{$\ddot{U}$ber die approximation irrationaler Zahlen durch rationale}, Heidelberg Akad. Wiss. Abh.(4), 1921.

\end{thebibliography}
\end{document}